\documentclass{amsart}

%%%%%%%%%%%%%%%%%%%%%%%%%%%%%%%%%%%%%%%%%%%%%%%%%%%%

%\usepackage{graphics}
\usepackage{amsmath, amssymb, amsthm, graphics, color}
\usepackage[backend=bibtex]{biblatex}

\usepackage{amsfonts}

%Bibliography
\bibliography{SummerResearch}

\usepackage{ textcomp }
%%%%%%%%%%%%%%%%%%%%%%%%%%%%%%%%%%%%%%%%%%%%%%%%%%%%
\newtheorem{theorem}{Theorem}

\newtheorem{lemma}[theorem]{Lemma}
\newtheorem{proposition}[theorem]{Proposition}
\newtheorem{corollary}[theorem]{Corollary}
\theoremstyle{remark}

%%%%%%%%%%%%%%%%%%%%%%%%%%%%%%%%%%%%%%%%%%%%%%%%%%%%

\newcommand{\ZZ}{\mathbb{Z}}

%Special Commands
\newcommand{\tr}[1]{\text{tr}_{L^{#1}/k}\langle 1 \rangle_{L^{#1}}}
\newcommand{\Dr}{h_{L/k}}
\newcommand{\Cr}{c^*_{L/k}}
\newcommand{\Gal}{\text{Gal}}
\newcommand{\1}{\langle1\rangle}
\newcommand{\kk}{k^\times / (k^\times)^2}

\title{Injectivity and Surjectivity of the Dress Map}
\author{Ricardo G Rojas-Echenique}

\begin{document}

\begin{abstract} 
For a nontrivial finite Galois extension $L/k$ (where the characteristic of $k$ is different
from 2) with Galois group $G$, we prove that the Dress map $\Dr : A(G) \to GW(k)$ is injective if and only if $L=k(\sqrt{\alpha})$ where $\alpha$ is not a sum of squares in $k^\times$. Furthermore, we prove that $\Dr$ is surjective if and only if $k$ is quadratically closed in $L$.  As a consequence, we give strong necessary conditions for faithfulness of the Heller-Ormsby functor $\Cr : \text{SH}_G\to \text{SH}_k$, as well as strong necessary conditions for fullness of $\Cr$. 
\end{abstract}

\maketitle

\section{Introduction}
Let $k$ be a field of characteristic different from $2$ and let $L/k$ be a finite Galois extension with Galois group $G$. Let $A(G)$ denote the Burnside ring of $G$ and let $GW(k)$ denote the Grothendieck-Witt ring of $k$. Recall that, as abelian groups, $A(G)$ is freely generated under disjoint union by cosets $G/H$ where $H$ runs through a set of  representatives for conjugacy classes of subgroups, and $GW(k)$ is generated by 1-dimensional quadratic forms $\langle a \rangle$, where $a$ runs through the group of square classes  $\kk$, under orthogonal sum $\langle a \rangle + \langle b \rangle = \langle a,b \rangle $. Multiplication in $A(G)$ is given by cartesian product with identity $G/G$ and multiplication in $GW(k)$ is given by the Kronecker product $\langle a \rangle \langle b \rangle = \langle ab \rangle $ with identity $\1$. Following the construction in \cite[Appendix B]{Dress}, the Dress map $\Dr: A(G) \to GW(k)$ is a ring homomorphism that takes the coset $G/H$ to the trace form $\tr{H}$, the quadratic form $x \mapsto \text{tr}_{L^H/k}x^2$. (Our restriction on the characteristic of $k$ is necessary for $\Dr$ to be well defined.)

A particular point of interest is that the Dress map appears naturally in the study of equivariant and motivic stable homotopy theory. Heller and Ormsby \cite[\textsection 4]{Ormsby} construct a strong symmetric monoidal triangulated functor $\Cr : \text{SH}_G\to \text{SH}_k$ from the stable $G$-equivariant homotopy category to the stable motivic homotopy category over $k$. This functor induces a homomorphism between the endomorphism rings of the unit objects in each category, which are in fact $A(G)$ and $GW(k)$, respectively. In \cite[Proposition 3.1]{Ormsby}, Heller and Ormsby show that this homomorphism agrees with $\Dr$. In particular, fullness and faithfulness of $\Cr$ are obstructed by surjectivity and injectivity of $\Dr$ respectively. 

The main goal of this note is to investigate when the Dress map, and thereby $\Cr$, is injective or surjective. While Heller and Ormsby have resolved the investigation when $\Dr$ is an isomorphism \cite[Theorem 3.4]{Ormsby}, we proceed by examining injectivity and surjectivity separately. 

When $L=k$ it is obvious that $\Dr$ is injective. The following theorem gives a complete account of when $\Dr$ is injective in the remaining cases. 

\begin{theorem}\label{main}
For a finite nontrivial Galois extension $L/k$,
$\Dr$ is injective if and only if $L=k(\sqrt{\alpha})$ where $\alpha \in k^\times$ is not a sum of squares in $k^\times$.
\end{theorem}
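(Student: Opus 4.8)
The plan is to reduce injectivity of $\Dr$ to a linear-independence statement and then detect that independence using ranks and signatures. Since $A(G)$ is free abelian on the classes $[G/H]$, the map $\Dr$ is injective if and only if the trace forms $\{\tr{H}\}_{(H)}$ are $\ZZ$-linearly independent in $GW(k)$. First I would record that the natural map $GW(k) \to \ZZ \times W(k)$, sending a form to its rank and its Witt class, is injective (by Witt cancellation), and that by Pfister's local--global principle the total signature map $W(k) \to \prod_{P} \ZZ$, indexed by the orderings $P$ of $k$, has torsion kernel. Chasing a hypothetical relation $\sum_H n_H \tr{H} = 0$ through these maps and clearing the finite order of any resulting torsion class shows that the trace forms are linearly independent in $GW(k)$ if and only if the integer vectors $\big(\operatorname{rank}\tr{H},\,(\operatorname{sgn}_P \tr{H})_P\big)$ are linearly independent over $\QQ$.

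The second step is to compute these ranks and signatures group-theoretically. The rank of $\tr{H}$ is $[G:H] = |(G/H)^{\{e\}}|$, the mark of the trivial subgroup on $G/H$. For the signature at an ordering $P$ with real closure $R_P$, I would use the classical fact that $\operatorname{sgn}_P \tr{H}$ equals the number of embeddings of $L^H$ into $R_P$ over $(k,P)$, since over $R_P$ the trace form of a copy of $R_P$ contributes $+1$ while that of a copy of $R_P(\sqrt{-1})$ contributes $0$. Identifying $\operatorname{Hom}_k(L^H, \bar k)$ with $G/H$ and letting $g_P \in G$ denote the image of complex conjugation (an element with $g_P^2 = e$, well defined up to conjugacy), the real embeddings are exactly the fixed points of $g_P$ on $G/H$. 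Hence $\operatorname{sgn}_P \tr{H} = |(G/H)^{\langle g_P\rangle}|$ is the mark of the order-$\leq 2$ subgroup $\langle g_P\rangle$. Thus the composite of $\Dr$ with (rank, signatures) is precisely the restriction of the mark homomorphism of $A(G)$ to the family $\mathcal{S} = \{\{e\}\}\cup\{\langle g_P\rangle : P \text{ an ordering of } k\}$.

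Now I would invoke that the full table of marks is a square matrix, indexed by conjugacy classes of subgroups, that is triangular with nonzero diagonal and hence invertible over $\QQ$. The restricted mark map is therefore injective if and only if $\mathcal{S}$ meets every conjugacy class of subgroups of $G$. Since every member of $\mathcal{S}$ has order at most $2$, while $G$ lies alone in its own conjugacy class, this forces $|G| \leq 2$; equivalently $L = k(\sqrt{\alpha})$ for a nonsquare $\alpha$. Conversely, when $G = \ZZ/2$ the only conjugacy classes are $\{e\}$ and $G$, and since $\{e\} \in \mathcal{S}$ always, injectivity holds if and only if some $\langle g_P\rangle$ equals $G$, i.e.\ complex conjugation acts nontrivially for some ordering. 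Finally I would translate this: $g_P \neq e$ means $P$ does not extend to $L = k(\sqrt{\alpha})$, i.e.\ $\alpha <_P 0$; such an ordering exists if and only if $\alpha$ is not totally positive, which by the Artin--Schreier characterization is equivalent to $\alpha$ not being a sum of squares (in the nonreal case there are no orderings, and indeed every element is then a sum of squares).

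The main obstacle I anticipate is the signature computation in the second step: establishing cleanly, with the correct equivariance conventions, that $\operatorname{sgn}_P \tr{H}$ equals the mark $|(G/H)^{\langle g_P\rangle}|$, and checking that the complex-conjugation element $g_P$ is well defined up to conjugacy independently of the auxiliary choice of real closure and base embedding. Once the signatures are identified with marks, the remainder is a dimension count against the invertible table of marks together with the standard sum-of-squares dictionary, both of which are routine.
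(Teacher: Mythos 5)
Your proof is correct, but it takes a genuinely different route from the paper's. The paper argues by explicit construction: for the forward implication it exhibits concrete elements of $\ker(\Dr)$ --- if an odd prime $p$ divides $|G|$, Lam's odd-degree formula gives $\Dr(G/e - pG/H) = 0$; if $|G| = 2^n$ with $n>1$, it passes to a degree-four subextension and writes down an explicit relation separately in the $\ZZ/4\ZZ$ case (using that the quadratic subfield is generated by a square root of a sum of two squares) and the Klein-four case --- while the reverse implication rests on the elementary Lemma \ref{squares} characterizing when $a\1 = b\langle 2, 2\alpha\rangle$. You instead detect all relations at once: composing $\Dr$ with rank and total signature, using Pfister's local--global principle to reduce linear independence in $GW(k)$ to linear independence of the resulting integer vectors, identifying $\operatorname{sgn}_P\tr{H}$ with the mark $|(G/H)^{\langle g_P\rangle}|$ of the complex-conjugation involution, and then running a dimension count against the invertible table of marks. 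Your approach buys uniformity (no case analysis on the group structure) and a clean conceptual statement --- $\Dr$ modulo torsion is the piece of the mark homomorphism indexed by the trivial subgroup and the involutions $\langle g_P\rangle$, so injectivity forces the class of $G$ itself to be realized by a subgroup of order at most $2$ --- at the price of heavier inputs: Pfister's local--global principle, the Sylvester-style signature formula for trace forms, and the Artin--Schreier dictionary between total positivity and sums of squares, where the paper needs only Witt-ring manipulations and standard Galois theory. The two arguments meet at the same endgame: once $|G| = 2$ is forced, everything reduces to whether $\alpha$ is a sum of squares, which in your setup is the existence of an ordering making $\alpha$ negative and in the paper's is Lemma \ref{squares}.
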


The proof of Theorem \ref{main} is given in \textsection \ref{inject}. Note that Theorem \ref{main}, taken with \cite[Proposition 3.1]{Ormsby}, immediately gives the following corollary. 

\begin{corollary}\label{funcinject}
If $\Cr$ is faithful, then either $L/k$ is the trivial extension or of the form described in Theorem \ref{main}.
\end{corollary}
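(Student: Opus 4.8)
The plan is to deduce the corollary directly from Theorem \ref{main} by transporting faithfulness of $\Cr$ into injectivity of $\Dr$. The key observation is purely categorical: a faithful functor is, by definition, injective on every morphism set, and in particular on the endomorphism ring of the unit object. Because $\Cr$ is strong symmetric monoidal, it sends the unit object $\mathbf{1}$ of $\text{SH}_G$ to an object canonically isomorphic to the unit object of $\text{SH}_k$; hence the map induced by $\Cr$ on endomorphisms of the unit is, after this canonical identification, a ring homomorphism $\mathrm{End}_{\text{SH}_G}(\mathbf{1}) \to \mathrm{End}_{\text{SH}_k}(\mathbf{1})$, i.e. a homomorphism $A(G) \to GW(k)$ between the endomorphism rings already identified in the introduction.

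First I would record that, assuming $\Cr$ is faithful, the induced map on $\mathrm{Hom}(\mathbf{1}, \mathbf{1})$ is injective. Next I would invoke \cite[Proposition 3.1]{Ormsby}, which identifies this induced homomorphism $A(G) \to GW(k)$ with the Dress map $\Dr$. Together these two facts show that $\Dr$ is injective. Finally I would apply Theorem \ref{main}, together with the observation already noted that $\Dr$ is trivially injective when $L = k$: injectivity of $\Dr$ forces $L/k$ to be either the trivial extension or of the form $L = k(\sqrt{\alpha})$ with $\alpha \in k^\times$ not a sum of squares, which is exactly the conclusion of the corollary.

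I expect no substantive obstacle here, since the entire difficulty has been absorbed into Theorem \ref{main} and the Heller--Ormsby identification. The only point deserving care is verifying that the homomorphism extracted from faithfulness coincides on the nose with the map Ormsby computes to be $\Dr$; this is immediate once one notes that faithfulness applies in particular to the unit Hom-set and that the monoidal structure identifies $\Cr(\mathbf{1})$ coherently with the unit of $\text{SH}_k$, so that no ambiguity arises in passing from the functor to the induced ring map.
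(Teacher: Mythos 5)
Your proposal is correct and is exactly the argument the paper intends: the paper derives this corollary immediately from Theorem \ref{main} together with \cite[Proposition 3.1]{Ormsby}, which is precisely your chain of faithfulness $\Rightarrow$ injectivity on the unit's endomorphism ring $\Rightarrow$ injectivity of $\Dr$ $\Rightarrow$ the conclusion of Theorem \ref{main} (or $L=k$). Your extra care about the monoidal identification of $\Cr(\mathbf{1})$ with the unit of $\text{SH}_k$ is a fine elaboration of what the paper leaves implicit in citing Heller--Ormsby.
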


The following theorem gives a complete account of when the Dress map is surjective. 

\begin{theorem}\label{surject}
For a finite Galois extension $L/k$, $\Dr$ is surjective if and only if $k$ is quadratically closed in $L$.
\end{theorem}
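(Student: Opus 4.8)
The plan is to study the image $R:=\Dr(A(G))$, a subring of $GW(k)$ that contains $\1$. Since $GW(k)$ is generated as an abelian group by the classes $\langle a\rangle$ with $a\in\kk$, surjectivity of $\Dr$ is equivalent to the condition that $\langle a\rangle\in R$ for every $a\in k^\times$. I will read ``$k$ is quadratically closed in $L$'' as the statement that every element of $k^\times$ is a square in $L$, and use throughout that $\Dr(G/H)=\tr{H}$ and that for an index-two subgroup $H$ with $L^H=k(\sqrt a)$ this trace form equals $\langle 2,2a\rangle$.

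For the ``if'' direction, suppose every element of $k^\times$ is a square in $L$. First I would show $\langle 2\rangle\in R$: either $2$ is already a square in $k$, so $\langle 2\rangle=\1\in R$, or $k(\sqrt 2)$ is a quadratic subextension of $L$ whose trace form $\langle 2,4\rangle=\langle 2\rangle+\1$ lies in $R$, again giving $\langle 2\rangle\in R$. Next, for an arbitrary nonsquare $a\in k^\times$, the hypothesis places $k(\sqrt a)$ inside $L$, so $\langle 2,2a\rangle=\langle 2\rangle+\langle 2a\rangle\in R$ and hence $\langle 2a\rangle\in R$; multiplying by $\langle 2\rangle\in R$ and using $\langle 2\rangle\langle 2a\rangle=\langle 4a\rangle=\langle a\rangle$ yields $\langle a\rangle\in R$. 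As squares contribute $\langle a\rangle=\1$, every generator lies in $R$, so $\Dr$ is surjective.

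For the ``only if'' direction, the key tool is base change along $L/k$, the ring homomorphism $\mathrm{res}_{L/k}\colon GW(k)\to GW(L)$ sending $\langle a\rangle$ to $\langle a\rangle_L$. I would compute its effect on generators: since $L^H\subseteq L$ and $L/k$ is Galois, all conjugates of a primitive element of $L^H$ already lie in $L$, so $L^H\otimes_k L$ splits as the étale algebra $L^{[G:H]}$, and the trace form base-changes to the split form $[G:H]\cdot\1_L$. Hence $\mathrm{res}_{L/k}$ carries each generator of $R$ into the subring $\mathbb{Z}\cdot\1_L$, so $\mathrm{res}_{L/k}(R)\subseteq\mathbb{Z}\cdot\1_L$. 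If $\Dr$ is surjective then $R=GW(k)$, so for every $a\in k^\times$ the rank-one class $\langle a\rangle_L$ lies in $\mathbb{Z}\cdot\1_L$; comparing ranks forces $\langle a\rangle_L=\1_L$, i.e. $a$ is a square in $L$. Thus $k$ is quadratically closed in $L$.

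The reductions (that the $\langle a\rangle$ generate $GW(k)$, and the identification $L^H=k(\sqrt a)$ for index-two $H$) are routine; the main point is the behavior of trace forms under the two operations used. I expect the crux to be the necessity direction, namely identifying $\tr{H}\otimes_k L$ with the split form $[G:H]\cdot\1_L$, which rests on the splitting $L^H\otimes_k L\cong L^{[G:H]}$ together with the compatibility of trace forms with base change. Once that computation is secured, both implications follow quickly.
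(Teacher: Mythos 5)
Your proof is correct and follows essentially the same route as the paper: the sufficiency direction extracts each $\langle a\rangle$ from the quadratic trace forms $\langle 2,2a\rangle$ (handling the square class of $2$ exactly as the paper does), and the necessity direction observes that every trace form base-changes along $\mathfrak{r}\colon GW(k)\to GW(L)$ to a multiple of $\1_L$, so surjectivity forces $\langle a\rangle_L=\1_L$ for all $a\in k^\times$. The only cosmetic differences are that you treat each nonsquare $a$ individually rather than via generators of $\kk$, and you justify the splitting $\mathfrak{r}(\text{tr}_{L^H/k}\langle 1\rangle_{L^H})=[G:H]\,\1_L$ via $L^H\otimes_k L\cong L^{[G:H]}$ where the paper cites Lam.
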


The proof of Theorem \ref{surject} is given in \textsection \ref{surjectivity}. The following corollary is immediate. 

\begin{corollary}\label{funcsurject}
If $\Cr$ is full, then $L/k$ is of the form described in Theorem \ref{surject}.
\end{corollary}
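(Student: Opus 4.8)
The plan is to deduce this corollary directly from Theorem \ref{surject} by observing that fullness of $\Cr$ forces the induced map on endomorphism rings of the unit objects to be surjective, and that this induced map is precisely the Dress map. Recall from the introduction that $A(G)$ and $GW(k)$ are identified with the endomorphism rings $\text{End}_{\text{SH}_G}(\mathbb{1})$ and $\text{End}_{\text{SH}_k}(\mathbb{1})$ of the respective unit objects, and that by \cite[Proposition 3.1]{Ormsby} the ring homomorphism $A(G) \to GW(k)$ that $\Cr$ induces on these endomorphism rings agrees with $\Dr$.

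The first step is to unwind what fullness of $\Cr$ asserts. By definition, a full functor induces a surjection on every hom-set, so in particular the map
\[
\text{SH}_G(\mathbb{1},\mathbb{1}) \longrightarrow \text{SH}_k(\Cr\mathbb{1},\Cr\mathbb{1})
\]
is surjective. Since $\Cr$ is strong symmetric monoidal, it carries the unit object of $\text{SH}_G$ to the unit object of $\text{SH}_k$, so the target of this map is $\text{End}_{\text{SH}_k}(\mathbb{1}) = GW(k)$. Thus fullness of $\Cr$ yields a surjection $\text{End}_{\text{SH}_G}(\mathbb{1}) = A(G) \twoheadrightarrow GW(k)$, and the functoriality of the construction identifies this surjection with the ring homomorphism that $\Cr$ induces on endomorphisms of the unit.

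Combining this with \cite[Proposition 3.1]{Ormsby}, the induced surjection is exactly $\Dr$, so $\Dr : A(G) \to GW(k)$ is surjective. Applying Theorem \ref{surject} then gives that $k$ is quadratically closed in $L$, which is the conclusion claimed.

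I expect the only delicate point to be the bookkeeping in the second step: one must confirm that the map whose surjectivity fullness guarantees is genuinely the same map that \cite[Proposition 3.1]{Ormsby} identifies with $\Dr$, rather than merely a map sharing its source and target. This amounts to checking that $\Cr$ preserves the unit identification compatibly with its action on the relevant hom-set, which follows from $\Cr$ being strong monoidal, so the argument is essentially formal once the identifications are in place.
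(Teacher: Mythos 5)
Your argument is correct and is essentially the paper's own: the paper treats the corollary as immediate, since fullness of $\Cr$ forces surjectivity of the induced map on endomorphism rings of the units, which by \cite[Proposition 3.1]{Ormsby} is $\Dr$, so Theorem \ref{surject} applies. Your extra care in checking that the unit identification from strong monoidality makes the fullness surjection literally the map of \cite[Proposition 3.1]{Ormsby} is exactly the routine bookkeeping the paper leaves implicit.
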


Theorems \ref{main} and \ref{surject} combine to replicate Heller and Ormsby's result that for a finite Galois extension $L/k$, $\Dr$ is an isomorphism if and only if either $k$ is quadratically closed and $L=k$, or $k$ is euclidean and $L = k(i)$. If $L/k$ is the trivial extension then Theorem \ref{surject} requires that $k$ be quadratically closed, otherwise Theorem \ref{main} requires that $L=k(\sqrt{\alpha})$ and $\kk$ contains an element that is not a sum of squares. In the latter case, $k$ must be formally real and then Theorem \ref{surject} requires that $\kk = \{(k^\times)^2, \alpha (k^\times)^2 \}$, \emph{i.e.} $|\kk| =2$, so $k$ is euclidean and $\alpha = -1$.  
 
\textbf{Acknowledgements.} I thank Kyle Ormsby for advising and editing this writeup and Irena Swanson for reviewing an earlier draft. Additionally, I thank the referee for suggesting several helpful improvements to the exposition. I gratefully acknowledge that this research was conducted with support under NSF grant DMS-1406327.

\section{Proof of Theorem \ref{main}}\label{inject}
We begin by stating a number of results that are necessary in the proof of Theorem \ref{main}. Many of these results are standard and are stated without proof. 

\begin{proposition}\label{traceforms}
Let $L/k$ be a finite Galois extension.
\begin{enumerate}
   \item If $L=k$, then $\tr{}=\1$.
   \item If $L=k(\sqrt{\alpha})$, then  
   $\tr{} = \langle 2, 2\alpha \rangle .$
   \item If $L=k(\sqrt{\alpha_1}, \sqrt{\alpha_2})$,  then 
    $\tr{} = \langle 1, \alpha_1, \alpha_2, \alpha_1\alpha_2 \rangle .$
\end{enumerate}
\end{proposition}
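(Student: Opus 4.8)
The plan is to recognize $\tr{}$ as the quadratic form attached to the symmetric $k$-bilinear form $B(x,y) = \text{tr}_{L/k}(xy)$ on the $k$-vector space $L$, and then to diagonalize $B$ by writing down its Gram matrix in a well-chosen $k$-basis. Since $\langle a_1, \dots, a_n\rangle$ is precisely the form whose Gram matrix is $\mathrm{diag}(a_1, \dots, a_n)$, it suffices to exhibit a basis in which $(B(e_i,e_j))$ is diagonal and read off the diagonal entries as square classes. The three cases correspond to the basis choices $\{1\}$, $\{1,\sqrt{\alpha}\}$, and $\{1, \sqrt{\alpha_1}, \sqrt{\alpha_2}, \sqrt{\alpha_1\alpha_2}\}$.

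Two elementary facts drive every entry of these matrices. First, for $c \in k$ one has $\text{tr}_{L/k}(c) = [L:k]\, c$, so the pure scalar products $e_i^2 \in k$ contribute the degree times a square class along the diagonal. Second, writing $\text{tr}_{L/k}(\gamma) = \sum_{\sigma \in G}\sigma(\gamma)$, a pure radical such as $\sqrt{\alpha}$ has trace zero, since the automorphism sending $\sqrt{\alpha} \mapsto -\sqrt{\alpha}$ pairs the summands so they cancel (equivalently, its minimal polynomial $x^2 - \alpha$ has vanishing subleading coefficient). This immediately gives case (1), as $\text{tr}_{k/k}(x^2) = x^2 = \langle 1\rangle$, and case (2), since the basis $\{1,\sqrt{\alpha}\}$ yields $B(1,1) = 2$, $B(\sqrt{\alpha},\sqrt{\alpha}) = \text{tr}_{L/k}(\alpha) = 2\alpha$, and $B(1,\sqrt{\alpha}) = \text{tr}_{L/k}(\sqrt{\alpha}) = 0$, so the Gram matrix is $\mathrm{diag}(2, 2\alpha)$.

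For case (3) I would check that every off-diagonal product $e_i e_j$ (with $i \neq j$) is a $k^\times$-multiple of one of the three radicals $\sqrt{\alpha_1}, \sqrt{\alpha_2}, \sqrt{\alpha_1\alpha_2}$ — for instance $\sqrt{\alpha_1}\cdot\sqrt{\alpha_1\alpha_2} = \alpha_1\sqrt{\alpha_2}$ — each of which has trace zero by the second fact above. Hence $B$ is again diagonal, now with entries $\text{tr}_{L/k}(1) = 4$, $\text{tr}_{L/k}(\alpha_i) = 4\alpha_i$, and $\text{tr}_{L/k}(\alpha_1\alpha_2) = 4\alpha_1\alpha_2$. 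The resulting form $\langle 4, 4\alpha_1, 4\alpha_2, 4\alpha_1\alpha_2\rangle$ collapses to $\langle 1, \alpha_1, \alpha_2, \alpha_1\alpha_2\rangle$ because $4 = 2^2$ is a square and $\langle c^2 a\rangle = \langle a\rangle$ in $GW(k)$.

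The computation itself is routine; the points demanding care are (a) confirming that the chosen monomials genuinely form a $k$-basis, which amounts to knowing the extensions have the asserted degrees $2$ and $4$, and (b) the normalization step at the end. It is worth flagging the contrast between cases (2) and (3): in the biquadratic case the common factor is the square $4$, which disappears from the square classes, whereas in the quadratic case the factor $2$ is generally a non-square and must be retained — precisely why the two formulas look structurally different.
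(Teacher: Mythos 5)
Your proof is correct, and since the paper states this proposition without proof (as a standard fact, essentially \cite[VII.3]{Lam}-style trace form computations), your Gram-matrix diagonalization in the bases $\{1\}$, $\{1,\sqrt{\alpha}\}$, $\{1,\sqrt{\alpha_1},\sqrt{\alpha_2},\sqrt{\alpha_1\alpha_2}\}$ is exactly the standard argument the author is implicitly invoking. You also correctly flag the two points that need care: that the monomials form a basis (i.e.\ the extensions have the stated degrees) and that the factor $4$ in case (3) vanishes as a square class while the factor $2$ in case (2) does not.
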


The following is a standard result from Galois theory.

\begin{proposition}\label{cyclic}
Let $L/k$ be a finite Galois extension with Galois group $G$. If $G \cong \ZZ/4\ZZ$, then there is a field $E$  between $L$ and $k$ such that $E=k(\sqrt{\alpha})$ where $\alpha = a^2 + b^2$ for some $a,b \in k^\times.$

\end{proposition}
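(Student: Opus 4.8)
The plan is to exploit the Galois correspondence to pin down $E$ and then reduce the claim to a single norm computation. Since $G \cong \ZZ/4\ZZ$ is cyclic of order $4$, it has a unique subgroup of order $2$, generated by $\sigma^2$ where $\sigma$ generates $G$. By the fundamental theorem of Galois theory the fixed field $E = L^{\langle\sigma^2\rangle}$ is the unique intermediate field with $[E:k] = 2$, and because $\mathrm{char}\,k \neq 2$ we may write $E = k(\sqrt{m})$ for some non-square $m \in k^\times$. It then suffices to produce a representative of the square class of $m$ that is a sum of two nonzero squares, since $k(\sqrt{m}) = k(\sqrt{\alpha})$ whenever $\alpha$ and $m$ differ by a square. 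I would obtain such a representative from the structural constraint that $\sigma$ has order $4$ rather than $2$.

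Concretely, write $L = E(\sqrt{\beta})$ with $\beta \in E^\times \setminus (E^\times)^2$, and let $\bar\sigma$ denote the restriction of $\sigma$ to $E$, the nontrivial element of $\Gal(E/k)$. Normality of $L/k$ forces $\sigma$ to stabilize $L$, so $\sigma(\sqrt{\beta})$ is a square root of $\bar\sigma(\beta)$ lying in $L$; expanding in the $E$-basis $\{1,\sqrt{\beta}\}$ shows $\bar\sigma(\beta) = d^2\beta$ for some $d \in E^\times$, and hence $\sigma(\sqrt{\beta}) = \pm d\sqrt{\beta}$. Applying $\sigma$ once more gives $\sigma^2(\sqrt{\beta}) = \bar\sigma(d)\,d\,\sqrt{\beta} = N_{E/k}(d)\,\sqrt{\beta}$, regardless of the sign. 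The crucial input is that $\sigma^2$ is the nontrivial element of $\Gal(L/E)$, so it acts on $L = E(\sqrt{\beta})$ by $\sqrt{\beta} \mapsto -\sqrt{\beta}$; comparing the two expressions yields $N_{E/k}(d) = -1$.

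Finally I would read off $\alpha$ from the norm equation. Writing $d = x + y\sqrt{m}$ with $x,y \in k$, the relation $N_{E/k}(d) = x^2 - m y^2 = -1$ gives, whenever $y \neq 0$,
\[
m = \left(\frac{x}{y}\right)^2 + \left(\frac{1}{y}\right)^2,
\]
a sum of two squares; here both summands are nonzero, since $x = 0$ would make $m = (1/y)^2$ a square. In the remaining case $y = 0$ the equation reduces to $x^2 = -1$, so $-1 \in (k^\times)^2$, and then the identity
\[
m = \left(\frac{m+1}{2}\right)^2 + \left(\frac{x(m-1)}{2}\right)^2
\]
exhibits $m$ as a sum of two nonzero squares (both terms are nonzero because $m \neq \pm 1$, as $\pm 1$ are squares). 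In either case $\alpha = m$ works and $E = k(\sqrt{\alpha})$. I expect the main obstacle to be the middle step: correctly translating the cyclic (rather than Klein four) structure of $G$ into the norm condition $N_{E/k}(d) = -1$, since the analogous computation for $G \cong \ZZ/2\ZZ \times \ZZ/2\ZZ$ would instead produce $N_{E/k}(d) = +1$ and impose no constraint on $m$.
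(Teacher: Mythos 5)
Your proof is correct and complete: the norm computation $N_{E/k}(d)=-1$ forced by $\sigma$ having order $4$, together with the two cases $y\neq 0$ and $y=0$ (the latter handled by the identity $m=\bigl(\tfrac{m+1}{2}\bigr)^2+\bigl(\tfrac{x(m-1)}{2}\bigr)^2$ when $-1$ is a square), correctly exhibits $m$ as a sum of two squares from $k^\times$. The paper states this proposition without proof as a standard fact of Galois theory, and your argument is precisely the standard one (the criterion for embedding a quadratic extension into a cyclic quartic one), so there is nothing to reconcile.
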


The following theorem is taken directly from Lam \cite[Proposition 6.14]{Lam}. 

\begin{proposition}\label{Lam}
Let $L/k$ be a finite Galois extension, and let $E$ be any field between $k$ and $L$ with $[L:E] =2r+1$. Then 
   $$\tr{} = (2r+1)\text{tr}_{E/k}\langle 1 \rangle_E .$$ 

\end{proposition}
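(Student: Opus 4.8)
The plan is to deduce the identity from transitivity of the trace together with the behaviour of the trace form of an odd-degree Galois extension. Write $q_{F/k}=\text{tr}_{F/k}\langle 1\rangle_F$ for the trace form of a finite separable extension $F/k$, and recall that $\text{tr}_{E/k}\colon E\to k$ induces an additive Scharlau transfer $s_*\colon GW(E)\to GW(k)$ with $s_*(\langle 1\rangle_E)=\text{tr}_{E/k}\langle 1\rangle_E$. Since $\text{tr}_{L/k}=\text{tr}_{E/k}\circ\text{tr}_{L/E}$, the bilinear form $(x,y)\mapsto\text{tr}_{L/k}(xy)$ on $L$ is obtained from the $E$-bilinear form $(x,y)\mapsto\text{tr}_{L/E}(xy)$ by post-composing with $\text{tr}_{E/k}$; that is, $\tr{}=s_*\big(\text{tr}_{L/E}\langle 1\rangle_L\big)$ in $GW(k)$. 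By additivity of $s_*$, the proposition therefore reduces to the purely $E$-linear statement that the trace form of $L/E$ satisfies $\text{tr}_{L/E}\langle 1\rangle_L\cong (2r+1)\langle 1\rangle_E$ in $GW(E)$, since applying $s_*$ to $(2r+1)\langle 1\rangle_E$ then yields $(2r+1)\,\text{tr}_{E/k}\langle 1\rangle_E$.

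Since $E$ is an intermediate field of the Galois extension $L/k$, the extension $L/E$ is itself Galois, say with group $H$ of odd order $2r+1$. The key step is to compute the trace form of $L/E$ after base change to $L$. Because $L/E$ is Galois, multiplication gives an isomorphism of $L$-algebras $L\otimes_E L\cong\prod_{\sigma\in H}L$, and the trace form commutes with base change, so $\big(\text{tr}_{L/E}\langle 1\rangle_L\big)\otimes_E L$ is the trace form of the split étale $L$-algebra $\prod_{\sigma\in H}L$, which is the unit form $(2r+1)\langle 1\rangle_L$. Hence the virtual form $\text{tr}_{L/E}\langle 1\rangle_L-(2r+1)\langle 1\rangle_E$ becomes $0$ after restriction to $L$.

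Now I would invoke Springer's theorem: since $[L:E]=2r+1$ is odd, the restriction homomorphism $W(E)\to W(L)$ is injective. Consequently $\text{tr}_{L/E}\langle 1\rangle_L$ and $(2r+1)\langle 1\rangle_E$ have the same class in $W(E)$; as they also have the same rank $2r+1$, they are isometric, so $\text{tr}_{L/E}\langle 1\rangle_L\cong(2r+1)\langle 1\rangle_E$ in $GW(E)$. Combined with the reduction above, this proves the proposition.

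I expect the main obstacle to be this middle step, namely identifying the trace form of $L/E$ with the unit form. The naive approach of diagonalizing the trace form directly runs into field-arithmetic case distinctions (for instance the Kummer versus non-Kummer cases in prime degree, where the unit form only emerges after invoking relations such as $\sqrt{(-1)^{(p-1)/2}p}\in k(\zeta_p)$). The argument above is designed to avoid all of this: the Galois splitting $L\otimes_E L\cong\prod_{\sigma\in H}L$ trivializes the form over $L$ for free, and oddness of the degree is used exactly once, to promote this over-$L$ triviality back to $E$ via Springer's theorem. The only points requiring care are that $L/E$ is genuinely Galois and separable (so that the relevant trace forms are nondegenerate) and that equality of Witt classes together with equality of ranks upgrades to an honest isometry in $GW(E)$.
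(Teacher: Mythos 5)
Your argument is correct. Note that the paper offers no proof of this proposition to compare against: it is quoted from Lam and stated without proof, so you have in effect supplied the missing textbook argument. Every step checks out. The reduction via transitivity of the trace is valid because $\text{tr}_{L/k}=\text{tr}_{E/k}\circ\text{tr}_{L/E}$ exhibits $\text{tr}_{L/k}\langle 1\rangle_L$ as the Scharlau transfer $s_*$ (along $\text{tr}_{E/k}$) of $\text{tr}_{L/E}\langle 1\rangle_L$, and $s_*$ is additive, so $s_*\bigl((2r+1)\langle 1\rangle_E\bigr)=(2r+1)\text{tr}_{E/k}\langle 1\rangle_E$ as required. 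The intermediate extension $L/E$ is indeed Galois and separable (being a subextension over an intermediate field of the Galois extension $L/k$), so both trace forms are nondegenerate; the splitting $L\otimes_E L\cong\prod_{\sigma\in\Gal(L/E)}L$ correctly identifies the base change of $\text{tr}_{L/E}\langle 1\rangle_L$ with $(2r+1)\langle 1\rangle_L$; and Springer's theorem applies since $[L:E]=2r+1$ is odd. The one point you flagged as needing care, upgrading equality of Witt classes to equality in $GW(E)$, is handled correctly: two nondegenerate forms of the same rank in the same Witt class are isometric by Witt decomposition and cancellation (valid in characteristic $\neq 2$, which the paper assumes throughout). This is essentially the standard proof one finds for Lam's Proposition 6.14, and it cleanly isolates the single use of oddness in Springer's theorem rather than attempting a direct diagonalization.
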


The following lemma is stated in different terms elsewhere. For a proof of this version see  \textsection \ref{surjectivity}. 

\begin{lemma}\label{squares}
For $\alpha \in k^{\times}$, there are positive integers $a,b$ such that $a\langle 1 \rangle = b\langle 2, 2\alpha \rangle $ if and only if  $\alpha$ is a sum of squares in $k^\times$.
\end{lemma}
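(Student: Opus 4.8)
The plan is to prove the statement via the theory of quadratic forms over $k$, specifically using the notion of forms being \emph{stably isometric} (equivalently, equal in $GW(k)$) together with the structure of sums of squares. The key realization is that the condition $a\langle 1 \rangle = b\langle 2, 2\alpha \rangle$ in $GW(k)$ forces a relationship between the forms $a\langle 1 \rangle$ and $\langle 2, 2\alpha\rangle$ that is governed by whether $\alpha$ is totally positive, i.e.\ whether $\alpha$ is a sum of squares. Recall that by the Artin-Schreier theory and the theory of the Witt ring, an element $\alpha \in k^\times$ is a sum of squares if and only if $\alpha$ is positive with respect to every ordering of $k$ (equivalently, if $k$ is not formally real, then every element is a sum of squares).

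For the reverse direction, I would first assume $\alpha$ is a sum of squares and produce explicit positive integers $a, b$ with $a\langle 1 \rangle = b\langle 2, 2\alpha\rangle$. The natural approach is to compute the class of $\langle 2, 2\alpha\rangle$ in $GW(k)$ and show that some positive integer multiple of it becomes a multiple of the hyperbolic-free part of $\langle 1 \rangle$. Concretely, if $\alpha = c_1^2 + \cdots + c_n^2$, then $\langle 1, \alpha\rangle$ (and hence $\langle 2, 2\alpha\rangle$, which is similar to $\langle 1, \alpha\rangle$ by scaling by $2$) represents a totally positive form, and by Pfister's theory some multiple of such a form is a multiple of $\langle 1 \rangle$ in the Witt ring; one then lifts this Witt-ring identity to $GW(k)$ by adding the appropriate number of copies of $\langle 1 \rangle$ to both sides to match dimensions, which is where the positivity of $a$ and $b$ is used.

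For the forward direction, I would argue by contraposition: assuming $\alpha$ is \emph{not} a sum of squares, I must show no such $a, b$ exist. Since $\alpha$ is not a sum of squares, $k$ is formally real and there exists an ordering of $k$ in which $\alpha < 0$. The signature homomorphism $\text{sgn} : GW(k) \to \ZZ$ associated to this ordering sends $\langle 1 \rangle \mapsto 1$ and $\langle 2, 2\alpha\rangle \mapsto 1 + (-1) = 0$ (since $2 > 0$ and $2\alpha < 0$ in this ordering). Applying $\text{sgn}$ to a hypothetical equality $a\langle 1 \rangle = b\langle 2, 2\alpha\rangle$ yields $a = 0$, contradicting $a$ being a positive integer. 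This signature obstruction is the clean way to rule out the equality.

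The main obstacle I anticipate is the reverse direction, specifically making the passage from "$\alpha$ is a sum of squares" to an explicit equality in $GW(k)$ (not merely in the Witt ring) with \emph{positive} integer coefficients. Working in the Witt ring loses dimension information, so care is needed to reconstruct the correct multiplicities. One efficient route is to recall that if $\alpha$ is a sum of squares then $\langle 1, \alpha \rangle$ and $2\langle 1 \rangle$ have the same signature under every ordering, and by Pfister's local-global principle their difference is torsion in the Witt ring; a concrete bound on the torsion order then provides the integer $b$, and matching dimensions provides $a$. Verifying that the resulting coefficients can be taken positive, rather than merely nonzero, is the delicate bookkeeping step, and I would handle it by adding hyperbolic planes (copies of $\langle 1, -1 \rangle$, whose total contribution can be absorbed) to clear any sign issues.
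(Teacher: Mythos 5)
Your proposal is correct in outline, but it takes a genuinely different route from the paper. For the forward direction the paper argues elementarily with represented elements: equality $a\langle 1\rangle=b\langle 2,2\alpha\rangle$ forces $2\alpha$ to be represented by a sum of squares form, and since the nonzero sums of squares form a multiplicative group, $\alpha$ is a sum of squares; you instead use the Artin--Schreier characterization of sums of squares as totally positive elements and kill the equality with a signature homomorphism. Both are valid, and your signature argument makes the obstruction conceptually transparent. For the reverse direction the paper gives a self-contained induction on the identity $\langle x\rangle+\langle y\rangle=\langle x+y\rangle(\langle 1\rangle+\langle xy\rangle)$, producing the explicit coefficients $a=2^{n}$, $b=2^{n-1}$ when $\alpha$ is a sum of $n$ squares; you instead invoke Pfister's local--global principle to see that $\langle\alpha\rangle-\langle 1\rangle$ is torsion in $W(k)$, which is heavier machinery but works: if $N(\langle\alpha\rangle-\langle 1\rangle)=0$ in $W(k)$, then $N\langle\alpha\rangle$ and $N\langle 1\rangle$ are Witt-equivalent forms of equal dimension, hence isometric by Witt decomposition, hence equal in $GW(k)$, and multiplying by $\langle 2\rangle$ and using $2\langle 2\rangle=2\langle 1\rangle$ yields the desired positive $a,b$. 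Note that the ``delicate bookkeeping'' you anticipate is actually automatic---no hyperbolic planes need to be adjoined, since equal Witt class plus equal dimension already gives isometry---and you do not even need the $2$-primary torsion theorem, only that the element is torsion. The trade-off is that the paper's argument is elementary and effective (explicit exponents tied to the number of squares needed), while yours is shorter modulo standard structure theory of the Witt ring.
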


We are now ready to prove Theorem \ref{main}. Suppose that $ L=k(\sqrt{\alpha})$ where $ \alpha \in k^\times$ is not a sum of squares in $k^\times$. Then the only subgroups of $\Gal(L/k)$ are the trivial subgroup and the entire group. Thus, by Proposition \ref{traceforms}, the image of $\Dr$ consists of elements of the form
\[
a\langle1\rangle + b\langle 2, 2\alpha \rangle \quad \text{ where } a,b \in \ZZ .
\]
Now suppose for contradiction that $\Dr$ is not injective. Then $\text{ker}(\Dr)$ is nontrivial. That is, 
\[
a\1 - b\langle 2, 2\alpha \rangle = 0 \quad \text{ for some } a,b \in \ZZ^+ .
\]
It follows from Lemma \ref{squares} that $\alpha$ is a sum of squares in $k^\times$. This contradicts the hypothesis so we are done with one direction. 

The other direction is more difficult so we separate the proof into lemmas. 

\begin{lemma}\label{oddprime}
Let $L/k$ be a finite Galois extension with Galois group $G$. If there is an odd prime $p$ such that $p$ divides $|G|$, then $\Dr$ is not injective. 
\end{lemma}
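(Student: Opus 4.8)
The plan is to exhibit an explicit nonzero element of $\ker(\Dr)$ manufactured from a subgroup of order $p$, using Proposition \ref{Lam} as the sole piece of real content. First I would invoke Cauchy's theorem: since the odd prime $p$ divides $|G|$, there is a subgroup $H \leq G$ with $|H| = p$. Passing to its fixed field $E = L^H$, the fundamental theorem of Galois theory gives that $E$ is an intermediate field of $L/k$ with $[L:E] = |H| = p$, which is odd.

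The key step is to apply Proposition \ref{Lam} to the tower $k \subseteq E \subseteq L$, writing $[L:E] = p = 2r+1$ with $r = (p-1)/2$. This yields
\[
\tr{} = p \cdot \text{tr}_{E/k}\langle 1 \rangle_E = p \cdot \tr{H}.
\]
Since $\tr{} = \tr{\{e\}} = \Dr(G/\{e\})$ and $\tr{H} = \Dr(G/H)$, and since $\Dr$ is additive, I obtain $\Dr(G/\{e\}) = p \cdot \Dr(G/H)$, hence
\[
\Dr\big( G/\{e\} - p\,(G/H) \big) = 0.
\]

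It then remains to verify that $G/\{e\} - p\,(G/H)$ is genuinely nonzero in $A(G)$, which is the only step requiring care. Because $A(G)$ is free abelian on the classes $G/K$ as $K$ ranges over conjugacy classes of subgroups, it suffices to note that $\{e\}$ and $H$ are never conjugate, as they have different orders ($1$ and $p$); thus $G/\{e\}$ and $G/H$ are distinct basis elements and the displayed integer combination has a nonzero coordinate. This also handles the edge case $|G| = p$, in which $H = G$ and the distinct generators are $G/\{e\}$ and $G/G$. Consequently $\ker(\Dr)$ is nontrivial and $\Dr$ is not injective. I expect no serious obstacle here: the entire analytic weight is carried by Proposition \ref{Lam}, and the remaining work is the bookkeeping identification $\tr{} = \tr{\{e\}}$ together with the observation that the two Burnside-ring generators are distinct.
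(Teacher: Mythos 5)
Your proposal is correct and follows essentially the same route as the paper: Cauchy's theorem produces a subgroup $H$ of order $p$, Proposition \ref{Lam} gives $\tr{} = p\,\tr{H}$, and the element $G/\{e\} - p\,(G/H)$ is a nonzero element of $\ker(\Dr)$ because the two generators lie in distinct conjugacy classes. Your extra remarks justifying non-conjugacy by order and noting the edge case $H = G$ are harmless elaborations of what the paper states more tersely.
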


\begin{proof}
Suppose there is an odd prime $p$ such that $p$ divides $|G|$. Then by Cauchy's theorem there is a subgroup $H \leq G$ of order $p$, so $[L : L^H]=p$. It follows from Proposition \ref{Lam} that
\begin{equation*} 
\begin{split}
\Dr(G/e - p G/H)  & =  \tr{} -p\tr{H} \\
 & = p\tr{H} -p\tr{H} = 0
\end{split}
\end{equation*}
where $e$ is the trivial subgroup. Clearly, $e$ and $H$ are in distinct conjugacy classes thus no linear combination of $G/e$ and $G/H$ is 0 in $A(G)$. Hence, $\text{ker}(\Dr)$ is nontrivial so $\Dr$ is not injective. 
\end{proof}

\begin{lemma}\label{primepower}
Let $L/k$ be a finite Galois extension with Galois group $G$. If $|G|=2^n$ for $n > 1$ then $\Dr$ is not injective. 
\end{lemma}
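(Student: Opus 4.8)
The plan is to reduce to an order-four quotient of $G$ and then split into the two possible isomorphism types, $\ZZ/4\ZZ$ and $\ZZ/2\ZZ \times \ZZ/2\ZZ$, producing in each case an explicit nonzero element of $\ker(\Dr)$.

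First I would use that a $2$-group $G$ of order $2^n$ with $n>1$ always admits a normal subgroup $N$ of index $4$: if $G$ is noncyclic its Frattini quotient has $\mathbb{F}_2$-dimension at least $2$ and so $G$ surjects onto $\ZZ/2\ZZ \times \ZZ/2\ZZ$, while if $G$ is cyclic it surjects onto $\ZZ/4\ZZ$. Setting $E = L^N$, the extension $E/k$ is Galois with $\Gal(E/k) \cong G/N$ of order $4$. Every subgroup I use will contain $N$, hence correspond to a subfield of $E$ whose trace form is computed by Proposition \ref{traceforms}. These subgroups will lie in distinct conjugacy classes (they have distinct indices, or are distinct index-$2$ subgroups, which are automatically normal), so any nonzero integer combination of the corresponding cosets is nonzero in $A(G)$; this is what certifies that the kernel element I build is genuinely nonzero.

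In the cyclic case, $G/N \cong \ZZ/4\ZZ$ and Proposition \ref{cyclic} produces an intermediate field $k(\sqrt{\alpha}) = L^H$ (with $H$ of index $2$) where $\alpha$ is a sum of two squares. Lemma \ref{squares} then yields positive integers $a,b$ with $a\1 = b\langle 2, 2\alpha\rangle = b\,\tr{H}$, so that $\Dr(a\,[G/G] - b\,[G/H]) = a\1 - b\,\tr{H} = 0$ while $a\,[G/G] - b\,[G/H] \neq 0$ in $A(G)$. In the biquadratic case, $E = k(\sqrt{\alpha_1}, \sqrt{\alpha_2})$ has three quadratic subfields $L^{H_1}, L^{H_2}, L^{H_3}$ of index $2$; the key computational input is the identity $\langle 1,1\rangle = \langle 2,2\rangle$ in $GW(k)$ (equal rank and determinant, with $2$ a common represented value), which after scaling gives $\langle c,c\rangle = \langle 2c, 2c\rangle$ for every square class $c$. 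Using this to rewrite each doubled $\langle \alpha_i\rangle$ term, I expect to verify the relation
\begin{equation*}
2\bigl(\tr{H_1}+\tr{H_2}+\tr{H_3}\bigr) - 2\,\tr{N} - 4\,\tr{G} = 0
\end{equation*}
in $GW(k)$, so that $x = 2[G/H_1]+2[G/H_2]+2[G/H_3]-2[G/N]-4[G/G]$ is a nonzero element of $\ker(\Dr)$.

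The main obstacle is the biquadratic case: unlike the cyclic case, one cannot always find a single quadratic subfield $k(\sqrt{\alpha})$ with $\alpha$ a sum of squares (over a suitably non-Pythagorean formally real $k$ one can arrange none of $\alpha_1,\alpha_2,\alpha_1\alpha_2$ to be a sum of squares), so Lemma \ref{squares} does not apply directly and one must instead exhibit a relation binding all three quadratic trace forms together with the trace form $\tr{N}$ of the full biquadratic field. The torsion identity $\langle 1,1\rangle = \langle 2,2\rangle$ is precisely what makes such a relation available, and confirming that the displayed combination has both vanishing rank and vanishing Witt class, hence vanishes in $GW(k)$, is the crux of the argument.
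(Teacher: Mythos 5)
Your proposal is correct and follows essentially the same route as the paper: pass to a normal subgroup of index $4$, split on whether the quotient is $\ZZ/4\ZZ$ or $\ZZ/2\ZZ\times\ZZ/2\ZZ$, and in the biquadratic case your five-term kernel element is exactly the negative of the paper's, with your identity $\langle 1,1\rangle = \langle 2,2\rangle$ playing the same role as the paper's observation that $2$ is a sum of two squares (hence $2\langle 2\rangle = 2\1$). The only cosmetic differences are your Frattini-quotient justification for the existence of the index-$4$ normal subgroup and your invocation of Lemma \ref{squares} in the cyclic case, where the paper instead writes out the explicit relation $4\1 = 2\langle 2,2\alpha\rangle$.
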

\begin{proof}
Suppose $|G|=2^n$ for $n > 1$. Since a group of order $p^n$ has a normal subgroup of order $p^k$ for each $0\leq k \leq n$, $G$ has a normal subgroup $H$ of order $2^{n-2}$. We have $|\Gal(L^H/k)|=4$, so
$\Gal(L^H/k) \cong \ZZ/4\ZZ$ or $\Gal(L^H/k) \cong \ZZ/2\ZZ \times \ZZ/2\ZZ$. We now analyze each case separately. 

Suppose $\Gal(L^H/k) \cong \ZZ/4\ZZ$. Then, by Proposition \ref{cyclic}, there is a subextension $E=k(\sqrt{\alpha})$ of $L$  where $\alpha$ is a sum of two squares.  We will use the fact that if $\alpha$ is a sum of two squares, then $2 \langle \alpha \rangle = 2\langle 1 \rangle$ (a more general version of this fact is proved in \textsection \ref{surjectivity}). Now by Proposition \ref{traceforms}, and since $\alpha$ is a sum of 2 squares, 
\begin{equation*} 
\begin{split}
\Dr(4G/G - 2G/\Gal(L/E)) &=  4\1 - 2\text{tr}_{E/k}\langle 1 \rangle_E \\
  & = 4\1 - 2\langle 2, 2\alpha \rangle \\
  & = 4\1 - 4\1 = 0.
\end{split}
\end{equation*}
Clearly $G$ and $\Gal(L/E)$ are in distinct conjugacy classes. Thus $\text{ker}(\Dr)$ is nontrivial so $\Dr$ is not injective. 

Now suppose $\Gal(L^H/k) \cong \ZZ /2\ZZ \times \ZZ /2\ZZ$. Then $L^H=k(\sqrt{\alpha_1},\sqrt{\alpha_2})$ and $L^H$ has distinct subextensions $E_1 =k(\sqrt{\alpha_1})$, $E_2=k(\sqrt{\alpha_2})$, and $E_3=k(\sqrt{\alpha_1\alpha_2})$. Let $H_1=\Gal(L/E_1)$, $H_2=\Gal(L/E_2)$, and $H_3=\Gal(L/E_3)$. Consider
\begin{equation*} 
\begin{split}
  & \Dr(4G/G + 2G/H - 2G/H_1 -2G/H_2 - 2G/H_3)  \\
  &=  4\tr{G} + 2\tr{H}- 2\text{tr}_{E_1/k}\langle 1 \rangle_{E_1} - 2\text{tr}_{E_2/k}\langle 1 \rangle_{E_2} - 2\text{tr}_{E_3/k}\langle 1 \rangle_{E_3}\\
  & = 4\1 +2\langle 1, \alpha_1, \alpha_2, \alpha_1\alpha_2 \rangle - 2\langle 2, 2\alpha_1 \rangle - 2\langle 2, 2\alpha_2 \rangle- 2\langle 2, 2\alpha_1\alpha_2 \rangle\\
  & = 4\1 +2\langle 1, \alpha_1, \alpha_2, \alpha_1\alpha_2 \rangle - 2\langle 1, \alpha_1 \rangle - 2\langle 1, \alpha_2 \rangle- 2\langle 1, \alpha_1\alpha_2 \rangle\\
  & = 6\1 + 2\langle  \alpha_1, \alpha_2, \alpha_1\alpha_2 \rangle - 6\1 -  2\langle  \alpha_1, \alpha_2, \alpha_1\alpha_2 \rangle = 0.
\end{split}
\end{equation*}
Since some of the subgroups are certainly in distinct conjugacy classes the kernel of $\Dr$ is nontrivial so $\Dr$ is not injective. 
\end{proof}

With these lemmas in hand, the rest of the proof of Theorem \ref{main} is quick. Let $L/k$ be a finite nontrivial Galois extension with Galois group $G$. Suppose that $\Dr$ is injective. Then, by Lemma \ref{oddprime}, $|G|$ has no odd divisors so $|G|=2^n$ for some positive integer $n$. Furthermore, by Lemma \ref{primepower}, $|G|=2$ so $G\cong \ZZ/2\ZZ$ and $L=k(\sqrt{\alpha})$. We know that $G$ has only two subgroups, so, by Proposition \ref{traceforms}, the image of $\Dr$ is generated by linear combinations of $\1$ and $ \langle 2, 2\alpha \rangle$. Since $\Dr$ is injective we have that
\[
  a\1-b \langle 2, 2\alpha \rangle \neq 0 \quad \text{for all } a,b \in \ZZ^+.
\]
It follows from Lemma \ref{squares} that $\alpha$ is not a sum of squares, which completes the proof of Theorem \ref{main}. 
 \hfill \qedsymbol

\section{Proofs of Theorem \ref{surject} and Lemma \ref{squares}}\label{surjectivity}

\begin{proof}[Proof of Theorem \ref{surject}]
Let $L/k$ be a finite Galois extension with Galois group $G$. Suppose that $k$ is quadratically closed in $L$. It follows that $\kk$ is finite. Clearly, $L \supseteq k(\sqrt{\alpha_1},\sqrt{\alpha_2},\dots,\sqrt{\alpha_n})$ where $\{\alpha_1 (k^\times)^2, \alpha_2 (k^\times)^2, \dots, \alpha_n (k^\times)^2 \}$ generate $\kk$. For every $\alpha_i$ there is a subgroup $H_i \leq G$ such that $L^{H_i}=k(\sqrt{\alpha_i})$. 

If 2 is a square in $k$, then
\begin{equation*} 
\begin{split}
\Dr(G/H_i - G/G) &=  \tr{H_i} - \1 \\
  & = \langle 2, 2\alpha_i \rangle - \1 \\
  & = \langle \alpha_i \rangle.
\end{split}
\end{equation*}

If 2 is not a square in $k$, then $L \supseteq k(\sqrt{2})$. Let $L^U=k(\sqrt{2})$ for $U \leq G$. We have
\begin{equation*} 
\begin{split}
\Dr((G/U -G/G)G/H_i - G/G) &=  (\tr{U} -\1)\tr{H_i} - \1 \\
  & = (\langle 2,4 \rangle -\1)\langle 2, 2\alpha_i \rangle - \1 \\
  & = \langle 2 \rangle \langle 2, 2\alpha_i \rangle - \1 \\
  & = \langle \alpha_i \rangle .
\end{split}
\end{equation*}

In either case the image of $\Dr$ contains the generators of $GW(k)$ as an abelian group, so $\Dr$ must be surjective. 

Now suppose that $\Dr$ is surjective. The composition of $\text{tr}_{L^H/k}$ with the functorial map $\mathfrak{r}:GW(k) \to GW(L)$ gives $\mathfrak{r}(\text{tr}_{L^H/k}(\1_{L^H}))=n\1_L$ for any subextension $L^H$ and some positive integer $n$ (see \cite[\textsection VII 6.4]{Lam}). Thus, $\Dr$ factors through the group $GW_L^\ZZ(k)$ of $k$-quadratic forms $q$ such that $\mathfrak{r}(q) = n\1_L$ for some integer $n$. Since $\Dr$ is surjective, $GW_L^\ZZ(k) = GW(k)$, whence $\langle a \rangle_L = \1_L$ for all $a\in k^\times$. It follows that $k$ is quadratically closed in $L$.
\end{proof}

\begin{proof}[Proof of Lemma \ref{squares}]
Suppose that $a\langle 1 \rangle = b\langle 2, 2\alpha \rangle$ where $a,b \in \ZZ^+$. Then the set of elements $D(b\langle 2, 2\alpha \rangle)$ of $k^{\times}$ represented by $b\langle 2, 2\alpha \rangle$ and the set of elements $ D(a\langle 1 \rangle)$ of $k^{\times}$ represented by $a\langle 1 \rangle$ are equal.  Clearly $2\alpha \in D(b\langle 2, 2\alpha \rangle)$, so $2\alpha \in D(a\langle 1 \rangle)$. It follows that, $2\alpha$ is a sum of squares in $k^\times$. Note that the set of sums of squares in $k^\times$ is a group under multiplication so $\alpha$ is a sum of squares. 

For the other direction we proceed by induction on the stronger claim: 
\[
\alpha \text{ is a sum of $n+1$ squares } \implies 2^n \langle 1 \rangle = 2^n \langle \alpha \rangle .
\]
The base case $n=0$ is clearly true. Now assume the claim holds for any sum of $n$ squares. Then, for any sum of $n+1$ squares $\alpha = x_1^2 + \cdots + x_n^2 +x_{n+1}^2$, we have, by a standard fact about quadratic forms  (see \cite[\textsection II 4]{Lam}), 
\[
\langle x_1^2 + \cdots +x_n^2 \rangle + \langle x_{n+1}^2 \rangle = \langle \alpha \rangle ( \1 + \langle x_{n+1}^2 \rangle \langle  x_1^2 + \cdots +x_n^2 \rangle ) .
\]
If we multiply both sides of the above equality by $2^{n-1}$ , it follows from the induction hypothesis that 
\[
2^{n-1}\1 + 2^{n-1}\1 = 2^{n-1} \langle \alpha \rangle ( \1 + \1 \langle  x_1^2 + \cdots +x_n^2 \rangle )
\]
which in turn implies 
\[
2^n\1 = 2^n\langle \alpha \rangle 
\]
so we have proved the claim. 

Now note that for any even positive integer $a$, since 2 is a sum of 2 squares, 
\[
a\1 = a\langle 2 \rangle .
\]
Putting this together we see,
\[
\alpha  \text{ is a sum of $n$ squares in $k^\times$} \implies 2^n\1 = 2^{n-1} \langle2\rangle(\1 + \langle \alpha \rangle) =2^{n-1}\langle 2, 2\alpha \rangle . 
\]
This proves the other direction. 
\end{proof}

\printbibliography

\end{document}